\tikzset{black/.style={circle,fill=black,inner sep=3pt,outer sep=3pt},
	white/.style={circle,fill=white,draw=black,inner sep=3pt,outer sep=3pt},
}
\newcolumntype{C}{>{$}c<{$}}
\def\red{\color{red}}
\newtheorem{theorem}{Theorem}[section]
\newtheorem{lemma}[theorem]{Lemma}
\newtheorem{corollary}[theorem]{Corollary}
\newtheorem{proposition}[theorem]{Proposition}
\theoremstyle{definition}
\newtheorem{definition}[theorem]{Definition}
\newtheorem{definition-proposition}[theorem]{Definition-Proposition}
\newtheorem{example}[theorem]{Example}
\def\C{\mathcal{C}}
\def\T{\mathcal{T}}
\def \text{\mbox}
\providecommand{\Ext}{\mathop{\rm Ext}\nolimits}%
\renewcommand{\mod}{\mathop{\rm mod}\nolimits}%
\providecommand{\ind}{\mathop{\rm ind}\nolimits}%
\begin{document}

\title{The Grothendieck groups of $n$-cluster categories of type $A_{\infty}$}

\author[Chang]{Huimin Chang}\thanks{$^\ast$Corresponding author.}
\address{Department of Applied Mathematics,
The Open University of China,
100039 Beijing,
P. R. China
}
\email{changhm@ouchn.edu.cn}

\begin{abstract}
In this article, we investigate the Grothendieck groups $K_0(\C_{A_{\infty}}^n)$ of $n$-cluster categories $\C_{A_{\infty}}^n$ of type $A_{\infty}$ introduced by T.~Holm and P.~J{\o}rgensen. We prove that $K_0(\C_{A_{\infty}}^n)\cong\mathbb{Z}$ for an arbitrary $n\geq 1$. As an application, this generalizes a result of Murphy for $n=1$. 
\end{abstract}

\subjclass[2020]{18F30; 18G80; 16G10} 

\keywords{Grothendieck group; $n$-cluster category; Dynkin type $A_{\infty}$}

\thanks{Huimin Chang is supported by the National Natural Science Foundation of China (Grant No. 12301047).}

\maketitle

\section{Introduction}
In classical settings, the notion of a Grothendieck group has been used previously to give connections between
certain subcategories of a category and subgroups of the ambient category’s Grothendieck group; see \cite{M,T}. The Grothendieck group of a triangulated category $\C$  is defined to be the free abelian group of isomorphism classes of $\C$, modulo the Euler relations: $[B] = [A] + [C]$ for each triangle $A\rightarrow B\rightarrow C\rightarrow \Sigma A$ in $\C$. 

The Grothendieck group is an important numerical invariant of a category. Calculation of the Grothendieck group $K_0(\C)$ of a given category $\C$ has been extensively studied in recent years. One important method is connecting $K_0(\C)$ to a split Grothendieck group $K_0^{\rm sp}(\T)$ of a (higher) cluster tilting subcategory $\T$. We present some related results:
\begin{itemize}
  \item [(1)] When $\mathcal{C}$ is a locally finite triangulated category, Xiao and Zhu \cite{XZ} showed $K_0(\C)$ is isomorphic to $K_0(\C)$ modulo the Euler relations formed by Auslander-Reiten triangles. In this case, $\C$ can be seen the only 1-cluster tilting subcategory of $\C$.
  \item [(2)] When $\mathcal{C}$ is a $2$-Calabi-Yau triangulated category with a 2-cluster tilting subcategory $\T$, Palu \cite{P} proved that $K_0(\C)$ is isomorphic to $K_0^{\text{sp}}(\T)$ modulo the Euler relations formed by exchange triangles, and he calculated the Grothendieck groups of cluster categories of finite Dynkin type $A_n, D_n$ and $E_n$. Later Murphy \cite{Mu} calculated the Grothendieck groups of discrete cluster categories of Dynkin type $A_{\infty}$ by this method.
  \item [(3)]  When $\mathcal{C}$ is a  triangulated
 category with a Serre functor $\mathbb{S}$ and an $n$-cluster tilting subcategory $\T$, Fedele \cite{F} presented a higher cluster tilting version of Palu's result \cite{P}, and he computed the Grothendieck groups of $n$-cluster categories of finite type $A_m$ for odd $n$. Later, Chang and Zhuang \cite{CZ} calculated the Grothendieck groups of $n$-cluster categories of finite type $D_m$ for odd $n$ by this method. 
 \item [(4)]  When $\mathcal{C}$ is a Krull-Schmidt extriangulated
 category with an $n$-cluster titling subcategory $\T$, Wang, Wei and Zhang \cite{WWZ} proved that $K_0(\C)$ is isomorphic to the index Grothendieck group of $\T$ under some additional assumption. Recently, Wang \cite{Wa} generalized Wang, Wei and Zhang's result and  calculated the Grothendieck groups of $n$-cluster categories of finite type $A_m$, which covers Palu's \cite{P} and Fedele's \cite{F} results.
\end{itemize}

Let $n$ be an integer, $k$ an algebraically closed field, and $\C$ be a $k$-linear algebraic triangulated category that is idempotent complete and classically generated by an $(n+1)$-spherical object. This category was first studied in \cite{J} for the case $n = 1$. For different values of $n$, the category $\C$ appears in various well-known forms. When $n = -1$, $\C$ is equivalent to $D^c(k[X]/(X^2))$, the compact derived category of the dual numbers. When $n = 0$, it corresponds to $D^f(k[![X]!])$, the derived category of complexes with bounded finite-length homology over the formal power series ring. For $n = 1$, $\T$ is the cluster category of type $A_{\infty}$; see \cite{HJ}. When $n > 1$, it can be interpreted as the $n$-cluster category of type $A_{\infty}$; see \cite{HJ1}.

We note that $n$-cluster categories are a central and dynamic area of research in higher representation theory, as they provide a rich framework to explore the mutation and classification of $n$-cluster tilting subcategories. Moreover, they also serve as a bridge between combinatorial and categorical approaches to higher cluster algebras. 

In this paper, we investigate the Grothendieck groups $K_0(\C_{A_{\infty}}^n)$ of $n$-cluster categories $\C_{A_{\infty}}^n$ of type $A_{\infty}$ introduced by T.~Holm and P.~J{\o}rgensen. We prove that $K_0(\C_{A_{\infty}}^n)\cong\mathbb{Z}$ for an arbitrary $n\geq 1$. As an application, this generalizes a result of Murphy for $n=1$.

This paper is organized as follows. In Section~2, we recall basic notions of Grothendieck groups and related results. In Section~3, we review the construction of  $n$-cluster categories of type $A_{\infty}$, and a geometric interpretation of $(n+1)$-cluster tilting subcategories. In Section~4, we prove the main result.

\subsection*{Conventions}
Throughout this paper, we assume that $\mathcal{C}$ is a Hom-finite, Krull–Schmidt triangulated category with shift functor $\Sigma$.  The word subcategory will always mean full subcategory closed under isomor
phisms, directsums, and direct summands. In particular, a subcategory is determined by the indecomposable objects it contains.

\section{Grothendieck groups in triangulated categories}
This section provides background material on Grothendieck groups in triangulated categories and related results.

\subsection{Grothendieck group}
In this subsection, we recall the definition of Grothendieck group in a triangulated category $\C$.

$\mathbf{Notations}$: For an object $X$ in $\C$, we use $[X]$ to denote the isoclass of $X$ and $F$ to denote the free abelian group generated by the isoclasses $[X]$ of objects $X$ in $\C$.
\begin{definition}[\cite{F,XZ}]
By using the same notations as above, we define the split Grothendieck group of $\C$ to be
$$K_0^{\rm sp}(\C):=F/<[X\oplus Y]-[X]-[Y]>,$$
where $X,Y\in\C$. We define the Grothendieck group of $\C$ to be
$$K_0(\C):=K_0^{\rm sp}(\C)/<[X]-[Y]+[Z]\;|\;X\rightarrow Y\rightarrow Z\rightarrow \Sigma X \mathrm{\; is\;a\;triangle\;in\;}\C>.$$
\end{definition}
\subsection{Grothendieck groups with (higher) cluster tilting subcategories} In this subsection, we recall the relationship of Grothendieck groups of $\C$ with split Grothendieck groups of $\T$, where $\mathcal{T}$ is the (higher) cluster tilting subcategory of $\C$.
\begin{definition}
A subcategory $\T$ of $\C$ is called  $(n+1)$-cluster tilting if 
\begin{itemize}
  \item [(1)] $\T$ is functorially finite;
  \item [(2)] $\T=\{M\in\T|\Ext^i(M,\T)=0 \text{\;for\;} 1\leq i\leq n\}=\{M\in\T|\Ext^i(\T, M)=0 \text{\;for\;} 1\leq i\leq n\}$.
\end{itemize}
\end{definition}

Let $\mathcal{T}$ be an $(n+1)$-cluster tilting subcategory of $\C$. Wang proved the following for an extriangulated category; see \cite[Theorem 5.4]{Wa}.
\begin{lemma}\label{C} 
Let $\mathcal{T}$ be an $(n+1)$-cluster tilting subcategory in $\mathscr{C}$. Then $$K_{0}(\mathscr{C})\cong K_{0}^{\rm in}(\mathcal{T}).$$
	\end{lemma}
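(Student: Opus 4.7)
The plan is to construct mutually inverse group homomorphisms between $K_0(\C)$ and $K_0^{\rm in}(\T)$, with the index of an object with respect to $\T$ providing the nontrivial direction.

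First I would define the index. Since $\T$ is contravariantly finite, each $X\in\C$ admits an $(n+1)$-term right $\T$-resolution
\[ T_n \to T_{n-1} \to \cdots \to T_1 \to T_0 \to X \]
obtained by iteratively taking right $\T$-approximations and completing to triangles; the $(n+1)$-cluster tilting property forces the final syzygy to land in $\T$, so the resolution closes into an $(n+2)$-angle in $\T$. Set
\[ \mathrm{ind}_\T(X) := \sum_{i=0}^{n}(-1)^{i}[T_i] \in K_0^{\rm sp}(\T). \]
A horseshoe-style comparison of two different resolutions shows that the image of $\mathrm{ind}_\T(X)$ in the quotient $K_0^{\rm in}(\T)$ does not depend on the choice of resolution.

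Next I would show that $X\mapsto\mathrm{ind}_\T(X)$ descends to a homomorphism $\Phi: K_0(\C)\to K_0^{\rm in}(\T)$. Given a triangle $X\to Y\to Z\to\Sigma X$, the octahedral axiom applied iteratively produces a commutative diagram whose rows are $\T$-resolutions of $X$, $Y$, $Z$ and whose columns are triangles in $\C$; collecting the induced relations yields $\mathrm{ind}_\T(Y)=\mathrm{ind}_\T(X)+\mathrm{ind}_\T(Z)$ modulo the defining relations of $K_0^{\rm in}(\T)$. Conversely, the inclusion $\T\hookrightarrow\C$ induces $\Psi: K_0^{\rm in}(\T)\to K_0(\C)$: to check well-definedness one decomposes each defining $(n+2)$-angle of $K_0^{\rm in}(\T)$ into $n+1$ ordinary triangles in $\C$ and sums the corresponding Euler relations with appropriate signs. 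The composition $\Phi\circ\Psi$ is the identity because a $\T$-resolution of $T\in\T$ is just $T$ itself, and $\Psi\circ\Phi=\mathrm{id}$ follows by summing the Euler relations along the resolution of $X$ to express $[X]$ in $K_0(\C)$ as $\sum(-1)^{i}[T_i]$.

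The main obstacle I anticipate is the well-definedness of $\Phi$: producing mutually compatible $(n+1)$-term $\T$-resolutions of the three objects in an arbitrary triangle, and then verifying that the column triangles assemble into exactly the alternating-sum relation that is killed in $K_0^{\rm in}(\T)$. In the $n=1$ case this is a single application of the octahedral axiom, essentially Palu's argument; for general $n$ it requires an inductive construction using higher octahedra, careful sign bookkeeping, and repeated use of the vanishing $\Ext^{i}(\T,\T)=0$ for $1\leq i\leq n$ to ensure that each intermediate syzygy has the $\T$-approximation needed to continue the construction and that the tail of the column triangles indeed lies in $\T$.
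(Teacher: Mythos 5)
First, a point of comparison: the paper does not actually prove this lemma --- it is imported verbatim from Wang [Wa, Theorem~5.4], proved there in the extriangulated setting --- so your proposal cannot be matched against an in-paper argument and has to stand on its own. Its outline is the standard index strategy (Palu for $n=1$, Fedele, and Wang itself), and the easy halves are fine: the existence of the $(n+2)$-angle resolution of any $X$ follows, as you say, from the vanishing $\Ext^{i}(\T,-)=0$ for $1\leq i\leq n$ applied to successive approximation triangles; $\Psi\colon K_0^{\rm in}(\T)\to K_0(\C)$ is well defined because each defining $(n+3)$-angle is a tower of triangles whose Euler relations telescope; and summing the Euler relations along a resolution gives $[X]=\sum_{i}(-1)^{i}[T_i]$ in $K_0(\C)$, which yields surjectivity of $\Psi$ and $\Psi\circ\Phi=\mathrm{id}$ once $\Phi$ exists.

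The genuine gap is exactly the step you flag as an ``obstacle'' and then wave through: that $\mathrm{ind}_{\T}$ is well defined in $K_0^{\rm in}(\T)$ and additive on triangles. For $n\geq 2$ this cannot be reduced to a routine horseshoe/octahedron chase, nor sidestepped by working in $K_0^{\rm sp}(\T)$, where the alternating sum genuinely depends on the chosen tower. The difficulty is that comparing two towers for the same object, or resolving the three terms of a triangle compatibly, produces towers whose intermediate cones do \emph{not} lie in $\T$; the resulting discrepancy is an alternating sum attached to objects of $\T$ glued along non-$\T$ cones, and one must prove that every such sum lies in the subgroup generated by the very specific relations defining $K_0^{\rm in}(\T)$, namely those coming from $(n+3)$-angles \emph{all} of whose terms, including the two end terms, lie in $\T$. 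That reduction is the actual content of Wang's theorem (and of Fedele's earlier version, which needed extra hypotheses such as a Serre functor and worked only with Auslander--Reiten $(n+3)$-angles); invoking ``higher octahedra and careful sign bookkeeping'' names this problem rather than solving it. Without that step your argument establishes only that $\Psi$ is surjective --- it does not exclude that the kernel of $K_0^{\rm sp}(\T)\twoheadrightarrow K_0(\C)$ is strictly larger than the subgroup of $(n+3)$-angle relations --- so the claimed isomorphism is not yet proved. Also note that your verification $\Phi\circ\Psi=\mathrm{id}$ presupposes this well-definedness, so it cannot be used to shortcut it.
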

We explain the notation $K_{0}^{\rm in}(\mathcal{T})$ in the following.
$$K_{0}^{\rm in}(\mathcal{T})=K_{0}^{\rm sp}(\mathcal{T})/\langle\sum_{i=1}^{n+3}(-1)^{i+1}[X_{i}]~|~X_{1}{\longrightarrow}\cdots \stackrel{}{\longrightarrow} X_{n+2}\stackrel{}{\longrightarrow}X_{n+3}~$$
$$\text{is an $(n+3)$-angle with terms in  $\mathcal{T}$}\rangle.$$

The $(n+3)$-angle $$X_{1}\stackrel{a_1}{\longrightarrow}X_{2}\stackrel{a_2}{\longrightarrow}\cdots \stackrel{a_{n+1}}{\longrightarrow} X_{n+2}\stackrel{a_{n+2}}{\longrightarrow}X_{n+3}$$
means that there exists triangles 
$$C_i\stackrel{b_i}{\longrightarrow}X_{i+1}\stackrel{c_{i+1}}{\longrightarrow}C_{i+1}{\longrightarrow}\Sigma C_{i}$$
for $1\leq i\leq n+2$ such that $a_i=c_ib_i$, where $C_1=X_1, C_{n+2}=X_{n+3}, a_1=b_1$ and $c_{n+2}=a_{n+2}$. We sometimes denote it by
\begin{equation*}
				\begin{tikzcd}[column sep =10.5, row sep =10.5]
					&X_2 \ar[dr]  \ar[rr] && X_3 \ar[dr] \ar[r] & \cdots  &  \cdots &  \cdots\ar[r]&X_{n+1} \ar[dr]\ar[rr]  & & X_{n+2}    \ar[dr]&\\
					X_1\ar[ur] &&C_2 \ar[ll,rightarrow]\ar[ur]&   &C_3  \ar[ll,rightarrow] \ar[r]& \cdots\ar[r] &C_{n}    \ar[ur]& &C_{n+1}\ar[ur]\ar[ll,rightarrow]	&   &X_{n+3} .\ar[ll,rightarrow]
				\end{tikzcd}
			\end{equation*}
and omit the morphisms.
\section{$n$-cluster categories of type $A_{\infty}$}
In this section, we review the notion of $\C_{A_{\infty}}^n$, the geometric description of $\C_{A_{\infty}}^n$, and  $(n+1)$-cluster tilting subcategories of $\C_{A_{\infty}}^n$.

\subsection{Realization of $\C_{A_{\infty}}^n$}
\begin{definition}\cite[Definition 1.1]{HJ1}
We say that $\C$ is a $k$-linear algebraic triangulated category with suspension functor $\Sigma$, which is idempotent complete and classically generated by an $(n+1)$-spherical object $s$, if
\[
\dim_k\T(s,\Sigma^{\ell} s)=\left\{
\begin{array}{cl}
1  & \text{if\ $\ell = 0$ or $\ell = n+1$,} \\
0  & \text{otherwise.}
\end{array}
\right.
\]
\end{definition}
It was proved in \cite{KYZ,HJ1} that $\C$ is uniquely determined by these properties, and can be thought of as a $n$-cluster category of Dynkin type $A_{\infty}$.

From now on, suppose that $n\geq1$ is an integer and $\C_{A_{\infty}}^n$ is the $n$-cluster categories of type $A_{\infty}$. By \cite[Proposition 1.8]{HJY}, the Serre functor of $\C_{A_{\infty}}^n$ is $\mathbb{S}=\Sigma^{n+1}$, and by \cite[Proposition 1.10]{HJY}, the Auslander-Reiten quiver  of $\C_{A_{\infty}}^n$
consists of $n$ components, each of which is a copy of $\mathbb{Z}A_{\infty}$, and $\Sigma$ acts cyclically
on the set of components. The $n$ components are denoted by $R,\Sigma R,\cdots,\Sigma^{n-1} R$ and satisfies $\Sigma^{n+i} R=\Sigma^{i} R$ for $i=0,1,\cdots,n-1$.

We recall the geometric realization of $\C_{A_{\infty}}^n$ based on \cite{HJ1}.
\begin{definition}
\begin{itemize}
  \item[(1)] Let $\Pi$ be an $\infty$-gon. A pair of integers $(t, u)$ as an arc in $\Pi$
connecting the integers $t$ and $u$ with $u-t\geq2$ and $u-t\equiv 1(\mod n)$ is
called an $n$-admissible arc.
  \item[(2)] Let $\mathfrak{U}$ be a set of $n$-admissible arcs. An integer $t$ is a left-fountain of $\mathfrak{U}$ if $\mathfrak{U}$ contains infinitely many $n$-admissible arcs of the form $(s, t)$, and $t$ is a right-fountain of $\mathfrak{U}$ if $\mathfrak{U}$ contains infinitely many $n$-admissible arcs of the form $(t, u)$. We say that $t$ is a fountain of $\mathfrak{U}$ if it is both a left-fountain and a right-fountain of $\mathfrak{U}$.
   \item[(3)] We say that $\mathfrak{U}$ is locally finite if, for each integer $t$, there are only finitely many $n$-admissible arcs of the form $(s, t)$ and $(t, u)$ in $\mathfrak{U}$.    
\end{itemize}
\end{definition}
The suspension functor $\Sigma$, the Serre funcotor $\mathbb{S}$, and the AR translation $\tau=\mathbb{S}\Sigma^{-1}$ of $\C_{A_{\infty}}^n$ act on an $n$-admissible arc $(t, u)$ are given by
$$\Sigma(t, u)=(t-1,u-1),\;\;\;\mathbb{S}(t, u)=(t-n-1,u-n-1),\;\;\;\tau(t, u)=(t-n,u-n).$$
Note that $\tau=\Sigma^n$.
\begin{proposition}\cite[Proposition 2.4]{HJ1}
There is a bijective correspondence between $\ind \C_{A_{\infty}}^n$ and the set of $n$-admissible arcs. Moreover, This extends to a bijective correspondence between subcategories of $\C_{A_{\infty}}^n$ and subsets of the set of $n$-admissible arcs.
\end{proposition}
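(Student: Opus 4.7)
The plan is to leverage the explicit description of the AR quiver of $\C_{A_{\infty}}^n$ and match its combinatorics with that of $n$-admissible arcs. The AR quiver consists of $n$ components $R, \Sigma R, \ldots, \Sigma^{n-1} R$, each a copy of $\mathbb{Z}A_{\infty}$, so the indecomposables are indexed by pairs (component, vertex of $\mathbb{Z}A_{\infty}$). I would set up coordinates on each component and then identify these coordinates with $n$-admissible arcs $(t,u)$.

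First I would fix a base indecomposable $x \in R$ and use the mesh structure of $\mathbb{Z}A_{\infty}$ to parameterize the vertices of $R$ by pairs $(t,u)$ with $u \geq t+2$ lying in a single residue class of $u - t$ modulo $n$. Since $\Sigma$ acts cyclically on the set of components, applying $\Sigma^k$ for $k = 0, 1, \ldots, n-1$ to indecomposables of $R$ produces all indecomposables of $\C_{A_{\infty}}^n$, with coordinates shifting by $(-k, -k)$. The constraint $u - t \equiv 1 \pmod{n}$ (after a suitable choice of base point) emerges because each component corresponds to a single residue class modulo $n$: indeed $\tau = \Sigma^n$ preserves each component and shifts coordinates by $(-n, -n)$, which preserves the residue $u - t \pmod{n}$. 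The $n$ different choices of $k$ then exhaust the $n$ residue classes compatible with $u - t \geq 2$ and $u - t \equiv 1 \pmod{n}$.

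Once the bijection on indecomposables is established, the second assertion is immediate from the conventions stated in the paper: subcategories are full and closed under isomorphisms, direct sums, and direct summands, so each subcategory is determined by the set of isoclasses of indecomposables it contains, yielding the claimed bijection with subsets of the set of $n$-admissible arcs.

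The main obstacle is carefully aligning the coordinates on the AR components with the arcs in the $\infty$-gon $\Pi$ so that the categorical actions of $\Sigma$, $\tau = \Sigma^n$, and $\mathbb{S} = \Sigma^{n+1}$ on indecomposables precisely match the stated actions $(t,u) \mapsto (t-1, u-1)$, $(t-n, u-n)$, and $(t-n-1, u-n-1)$ on arcs. The $(n+1)$-sphericity of the generator $s$ provides the only nontrivial input and should allow verification of these matches by computing $\dim_k \Hom(s, \Sigma^{\ell} s)$ and reading off the meshes in the AR quiver, after which the global labeling is forced by the mesh relations and the cyclic $\Sigma$-action.
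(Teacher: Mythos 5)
There is a genuine error in your combinatorial bookkeeping, and it sits exactly at the point your argument is supposed to do its work. You claim that the $n$ AR components correspond to the $n$ residue classes of $u-t$ modulo $n$ and that applying $\Sigma^k$, which shifts coordinates by $(-k,-k)$, lets the cyclic $\Sigma$-action "exhaust the $n$ residue classes compatible with $u-t\geq 2$ and $u-t\equiv 1 \pmod n$." This cannot be right: shifting both endpoints by the same amount preserves $u-t$, so $\Sigma^k$ never changes the residue of $u-t$; moreover $u-t\equiv 1\pmod n$ is a \emph{single} residue class, which every $n$-admissible arc satisfies, in every component. What actually distinguishes the $n$ components is the residue of $t$ (equivalently of $u$) modulo $n$ — compare Figures \ref{figure1}--\ref{figure3} for $n=3$, where all arcs in all three components have $u-t\equiv 1\pmod 3$, but $R$, $\Sigma R$, $\Sigma^2 R$ are separated by $u\equiv 0,2,1 \pmod 3$ respectively. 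Within a fixed component the rows are the $\tau=\Sigma^n$-orbits, with $u-t=n+1$ on the bottom row and $u-t$ increasing by $n$ as one moves up; the condition $u-t\equiv 1\pmod n$ comes from this row structure, not from the choice of component. As written, your mechanism for producing the arc labels does not yield the correct parameter set, so the bijection is not established.

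Two further remarks. First, the paper does not prove this proposition at all: it is imported verbatim from \cite[Proposition 2.4]{HJ1}, so any proof you supply is necessarily a reconstruction of that source; the honest route is either to cite it or to redo the coordinatization of \cite{HJ1} correctly, where the substantive input is not only the mesh structure but the computation of $\Hom$-spaces (arcs crossing corresponds to nonvanishing extensions), which is needed if you want the labeling to be canonical and compatible with the stated actions of $\Sigma$, $\tau$ and $\mathbb{S}$ rather than an arbitrary bijection of countable sets. Second, your treatment of the "moreover" part is fine: under the paper's convention that subcategories are closed under isomorphisms, direct sums and direct summands in a Krull--Schmidt category, a subcategory is determined by its set of indecomposables, so that half follows once the bijection on indecomposables is in place.
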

In the following, we sometimes use an $n$-admissible arc $(t, u)$ to represent an indecomposable object $M$ in $\C_{A_{\infty}}^n$ and write $M = (t, u)$ without confusion.
\begin{example}
Let $n=3$. The Auslander-Reiten quiver  of $\C_{A_{\infty}}^3$ contains 3 components $R,\Sigma R$ and $\Sigma^2R$. We draw the  Auslander-Reiten quiver  of  $\C_{A_{\infty}}^3$, see Figures \ref{figure1}-\ref{figure3}.
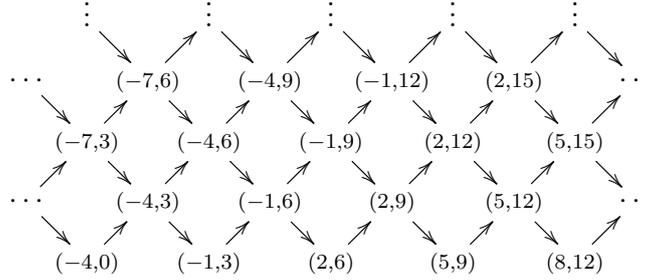
\begin{figure}[h]
\[
  \xymatrix @-3.0pc @! {
    & \vdots \ar[dr] & & \vdots \ar[dr] & & \vdots \ar[dr] & & \vdots \ar[dr] & & \vdots \ar[dr] & \\
    \cdots \ar[dr]& & {\scriptstyle (-7,6)} \ar[ur] \ar[dr] & & {\scriptstyle (-4,9)} \ar[ur] \ar[dr] & & {\scriptstyle (-1,12)} \ar[ur] \ar[dr] & & {\scriptstyle (2,15)} \ar[ur] \ar[dr] & & \cdots \\
    & {\scriptstyle (-7,3)} \ar[ur] \ar[dr] & & {\scriptstyle (-4,6)} \ar[ur] \ar[dr] & & {\scriptstyle (-1,9)} \ar[ur] \ar[dr] & & {\scriptstyle (2,12)} \ar[ur] \ar[dr] & & {\scriptstyle (5,15)} \ar[ur] \ar[dr] & \\
    \cdots \ar[ur]\ar[dr]& & {\scriptstyle (-4,3)} \ar[ur] \ar[dr] & & {\scriptstyle (-1,6)} \ar[ur] \ar[dr] & & {\scriptstyle (2,9)} \ar[ur] \ar[dr] & & {\scriptstyle (5,12)} \ar[ur] \ar[dr] & & \cdots\\
    & {\scriptstyle (-4,0)} \ar[ur] & & {\scriptstyle (-1,3)} \ar[ur] & & {\scriptstyle (2,6)} \ar[ur] & & {\scriptstyle (5,9)} \ar[ur] & & {\scriptstyle (8,12)} \ar[ur] & \\
               }
\]
\caption{The component $R$ of the Auslander-Reiten quiver  of $\C_{A_{\infty}}^3$.}
\label{figure1}
\end{figure}
\begin{figure}[h]
\[
  \xymatrix @-3.0pc @! {
    & \vdots \ar[dr] & & \vdots \ar[dr] & & \vdots \ar[dr] & & \vdots \ar[dr] & & \vdots \ar[dr] & \\
    \cdots \ar[dr]& & {\scriptstyle (-8,5)} \ar[ur] \ar[dr] & & {\scriptstyle (-5,8)} \ar[ur] \ar[dr] & & {\scriptstyle (-2,11)} \ar[ur] \ar[dr] & & {\scriptstyle (1,14)} \ar[ur] \ar[dr] & & \cdots \\
    & {\scriptstyle (-8,2)} \ar[ur] \ar[dr] & & {\scriptstyle (-5,5)} \ar[ur] \ar[dr] & & {\scriptstyle (-2,8)} \ar[ur] \ar[dr] & & {\scriptstyle (1,11)} \ar[ur] \ar[dr] & & {\scriptstyle (4,14)} \ar[ur] \ar[dr] & \\
    \cdots \ar[ur]\ar[dr]& & {\scriptstyle (-5,2)} \ar[ur] \ar[dr] & & {\scriptstyle (-2,5)} \ar[ur] \ar[dr] & & {\scriptstyle (1,8)} \ar[ur] \ar[dr] & & {\scriptstyle (4,11)} \ar[ur] \ar[dr] & & \cdots\\
    & {\scriptstyle (-5,-1)} \ar[ur] & & {\scriptstyle (-2,2)} \ar[ur] & & {\scriptstyle (1,5)} \ar[ur] & & {\scriptstyle (4,8)} \ar[ur] & & {\scriptstyle (7,11)} \ar[ur] & \\
               }
\]

\caption{The component $\Sigma R$ of the Auslander-Reiten quiver  of $\C_{A_{\infty}}^3$.}
\label{figure2}
\end{figure}
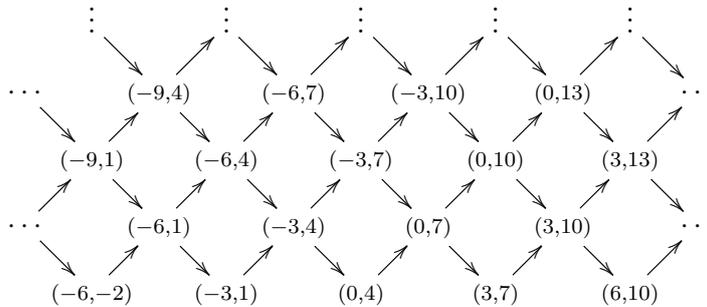
\begin{figure}[h]
\[
  \xymatrix @-3.0pc @! {
    & \vdots \ar[dr] & & \vdots \ar[dr] & & \vdots \ar[dr] & & \vdots \ar[dr] & & \vdots \ar[dr] & \\
    \cdots \ar[dr]& & {\scriptstyle (-9,4)} \ar[ur] \ar[dr] & & {\scriptstyle (-6,7)} \ar[ur] \ar[dr] & & {\scriptstyle (-3,10)} \ar[ur] \ar[dr] & & {\scriptstyle (0,13)} \ar[ur] \ar[dr] & & \cdots \\
    & {\scriptstyle (-9,1)} \ar[ur] \ar[dr] & & {\scriptstyle (-6,4)} \ar[ur] \ar[dr] & & {\scriptstyle (-3,7)} \ar[ur] \ar[dr] & & {\scriptstyle (0,10)} \ar[ur] \ar[dr] & & {\scriptstyle (3,13)} \ar[ur] \ar[dr] & \\
    \cdots \ar[ur]\ar[dr]& & {\scriptstyle (-6,1)} \ar[ur] \ar[dr] & & {\scriptstyle (-3,4)} \ar[ur] \ar[dr] & & {\scriptstyle (0,7)} \ar[ur] \ar[dr] & & {\scriptstyle (3,10)} \ar[ur] \ar[dr] & & \cdots\\
    & {\scriptstyle (-6,-2)} \ar[ur] & & {\scriptstyle (-3,1)} \ar[ur] & & {\scriptstyle (0,4)} \ar[ur] & & {\scriptstyle (3,7)} \ar[ur] & & {\scriptstyle (6,10)} \ar[ur] & \\
               }
\]

\caption{The component $\Sigma^2 R$ of the Auslander-Reiten quiver  of $\C_{A_{\infty}}^3$.}
\label{figure3}
\end{figure}
\end{example}
\subsection{$(n+1)$-cluster tilting subcategories}
In this subsection, we recall the geometric description of $(n+1)$-cluster tilting subcategories of $\C_{A_{\infty}}^n$.

\begin{definition}
We say that $\mathfrak{U}$ is an $(n+2)$-angulation of the ${\infty}$-gon if it is a maximal set of pairwise non-crossing $n$-admissible arcs.
\end{definition}

\begin{lemma}\cite[Theorem E]{HJ1}
\label{aa}
The subcategory $\T$ of $\C_{A_{\infty}}^n$ is $(n+1)$-cluster tilting if and only if the corresponding set of $n$-admissible arcs is an $(n+2)$-angulation of the $\infty$-gon which is either locally finite or has a fountain.
\end{lemma}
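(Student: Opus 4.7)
My plan is to establish the stated bijection between $(n+1)$-cluster tilting subcategories of $\C_{A_\infty}^n$ and certain combinatorial configurations in the $\infty$-gon, by translating each of the two defining conditions of being $(n+1)$-cluster tilting---the Ext-vanishing (rigidity plus maximality) condition, and the functorial finiteness condition---into geometric statements about $n$-admissible arcs.

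First, I would carry out an Ext-computation. Using the bijection between $\ind\C_{A_\infty}^n$ and $n$-admissible arcs together with the formula $\Sigma(t,u)=(t-1,u-1)$, I would determine exactly when $\Ext^i((t,u),(s,v))=\Hom((t,u),\Sigma^i(s,v))$ vanishes for $1\leq i\leq n$. For the classical $n=1$ case (\cite{HJ}), the answer is that Ext vanishes precisely when the arcs do not cross, and I expect the analogous statement to hold here: two $n$-admissible arcs have vanishing $\Ext^i$ for all $1\leq i\leq n$ if and only if they do not cross in $\Pi$. Serre duality $\mathbb{S}=\Sigma^{n+1}$ automatically makes the Ext-vanishing symmetric between the two arguments, so the two conditions in the definition of cluster tilting collapse to one.

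Second, having shown that rigid subcategories correspond to sets of pairwise non-crossing $n$-admissible arcs, I would argue that the maximality condition $\T=\{M\mid \Ext^i(\T,M)=0,\,1\leq i\leq n\}$ is equivalent to maximality of the non-crossing set, i.e.\ to being an $(n+2)$-angulation: any $n$-admissible arc not in $\mathfrak{U}$ must cross some existing arc, which by the Ext-computation yields a non-zero Ext and so forces the corresponding indecomposable into $\T$. Third, and hardest, I would translate the functorial finiteness of $\T$ into the combinatorial condition of being locally finite or having a fountain. Given an indecomposable $X=(t,u)\notin\T$, a minimal right $\T$-approximation should exist precisely when the arcs of $\mathfrak{U}$ incident to the integers $t$ and $u$ admit a ``closest'' one on the relevant side---this is exactly what fails when $\mathfrak{U}$ has a one-sided accumulation at an integer without a fountain to consolidate it. I would make this precise by constructing explicit left and right approximations when $\mathfrak{U}$ is locally finite or has a fountain (using the local combinatorics around $t$ and $u$ together with the AR-quiver picture in Figures~\ref{figure1}--\ref{figure3}), and conversely, given an $\mathfrak{U}$ that is neither locally finite nor has a fountain, exhibiting an indecomposable object admitting no right (or no left) $\T$-approximation.

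The main obstacle I anticipate is this functorial finiteness direction: one has to handle the subtle interaction between the three regimes (finitely many arcs meeting each endpoint, two-sided accumulation at a single integer, or one-sided accumulation without a matching opposite side), and show that only a balanced fountain is compatible with the existence of approximations. The Ext-computation itself is a close analog of the $n=1$ case treated in \cite{HJ}, and I would expect only bookkeeping adjustments coming from the admissibility condition $u-t\equiv 1\pmod n$ and from the action of $\Sigma$ cycling the $n$ AR components $R,\Sigma R,\ldots,\Sigma^{n-1}R$.
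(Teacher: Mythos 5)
First, a point of order: the paper does not prove Lemma~\ref{aa} at all --- it is quoted verbatim from \cite[Theorem~E]{HJ1}, and the intended justification in this paper is the citation. So your proposal cannot be compared with an in-paper argument; it has to be measured against the proof in \cite{HJ1}. In outline you do follow the same route as that source: (i) translate the Ext-conditions into arc combinatorics by showing that $\Ext^i$ vanishes for all $1\leq i\leq n$ between two indecomposables exactly when the corresponding $n$-admissible arcs do not cross, with the $(n+1)$-Calabi--Yau property $\mathbb{S}=\Sigma^{n+1}$ giving the symmetry between the two maximality conditions; (ii) identify Ext-maximality with maximality of the non-crossing collection, which is the definition of $(n+2)$-angulation used here; (iii) translate functorial finiteness into the ``locally finite or fountain'' condition.

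The gap is that the two steps carrying the actual content of the theorem are asserted by analogy rather than proved. For (i), ``only bookkeeping adjustments'' from the $n=1$ case understates the work: you need the explicit description of the Hom-spaces of $\C_{A_{\infty}}^n$ (the forward and backward hammocks inside each of the $n$ components of the AR quiver, as in \cite{HJY,HJ1}), and for $n\geq 2$ a crossing produces a nonzero $\Ext^i$ only for one specific $i$ depending on the relative position of the arcs, so one must verify both that every crossing yields some $i\in\{1,\dots,n\}$ with nonvanishing Ext and that non-crossing pairs yield none. For (iii), the heuristic ``a minimal right $\T$-approximation exists precisely when there is a closest arc'' is not an argument; the proof in \cite{HJ1} goes through precise criteria for when an arc collection gives a precovering, respectively preenveloping, subcategory, phrased in terms of one-sided fountains (a right-fountain that is not a left-fountain obstructs approximations on one side, and dually), and only then combines these with maximality to get ``locally finite or has a fountain.'' This is exactly where weak cluster tilting and cluster tilting diverge --- the point of \cite{HJ1} --- and your sketch names the difficulty without resolving it. So the plan is aligned with the known proof, but as a standalone proof it is incomplete at steps (i) and (iii); within this paper the correct move is simply to cite \cite[Theorem~E]{HJ1}.
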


\begin{example}\label{excluster}
Let $$T_{2k}=(1-kn,2+kn),\; k=1,2,\cdots$$
 $$T_{2k+1}=(1-kn,2+(k+1)n), \;k=0,1,2,\cdots$$
It is easy to check that the set of $n$-admissible arcs $\{T_1,T_2,\cdots\}$ is an $(n+2)$-angulation of the ${\infty}$-gon and locally finite, see Figure \ref{n-cluster}. Thus, it corresponds to an $(n+1)$-cluster tilting subcategory of $\C_{A_{\infty}}^n$. We denote this subcategory by $\T$.
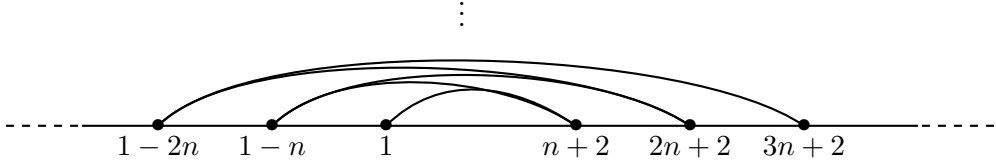
\begin{figure}[ht]\centering
\begin{tikzpicture}
\draw[thick] (-8,0) to (3,0);
\draw[thick, dashed] (-9,0) to (-8,0) (4,0) to (3,0);
\draw[thick] (-7,0)node{$\bullet$}node[below]{$1-2n$}(-5.5,0)node{$\bullet$}node[below]{$1-n$}(-4,0)node{$\bullet$}node[below]{1}
(-1.5,0)node{$\bullet$}node[below]{$n+2$} (0,0)node{$\bullet$}node[below]{$2n+2$} (1.5,0)node{$\bullet$}node[below]{$3n+2$}
(-3,2)node[below]{$\vdots$};
\draw[thick] (-5.5,0) ..  controls +(45:1.2) and +(145:1.2) .. (-1.5,0);
\draw[thick] (-4,0) ..  controls +(45:1) and +(145:1) .. (-1.5,0);
\draw[thick] (-5.5,0) ..  controls +(45:1.4) and +(145:1.4) .. (0,0);
\draw[thick] (-7,0) ..  controls +(45:1.6) and +(145:1.6) .. (0,0);
\draw[thick] (-7,0) ..  controls +(45:1.8) and +(145:1.8) .. (1.5,0);
\end{tikzpicture}
\caption{An $(n+2)$-angulation of the ${\infty}$-gon and locally finite}
\label{n-cluster}
\end{figure}
\end{example}

\section{Grothendieck groups of $\C_{A_{\infty}}^n$}
The main result of this paper is the following.
\begin{theorem}
Let $K_0(\C_{A_{\infty}}^n)$ be the Grothendieck groups of $\C_{A_{\infty}}^n$. Then we have
$$K_0(\C_{A_{\infty}}^n)=\mathbb{Z}.$$
\end{theorem}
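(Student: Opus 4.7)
The natural route is to apply Lemma \ref{C} with the concrete $(n+1)$-cluster tilting subcategory $\T$ from Example \ref{excluster}, whose indecomposables are the countable family $\{T_j\}_{j\ge 1}$. This yields $K_0(\C_{A_{\infty}}^n)\cong K_0^{\rm in}(\T)$, and since $K_0^{\rm sp}(\T)=\bigoplus_{j\ge 1}\mathbb{Z}[T_j]$, the problem reduces to understanding the relations produced by $(n+3)$-angles with all terms in $\T$.

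I plan to compute $K_0^{\rm in}(\T)$ via two mutually inverse maps. For the surjection $\mathbb{Z}\twoheadrightarrow K_0^{\rm in}(\T)$, I would identify, for each $j$, an $(n+3)$-angle with terms in $\T$ whose Euler relation forces $[T_j]$ to be equivalent (up to sign) to $[T_{j\pm 1}]$ or to another $[T_k]$ close by; iterating collapses all classes to a single $\mathbb{Z}$-summand. Using the geometric model reviewed in Section~3 and the zig-zag shape of the arcs $T_j$ in Figure \ref{n-cluster}, these $(n+3)$-angles can be read off from small configurations of $n$-admissible arcs: each block of consecutive arcs of $\T$ that sits inside a common small region of the $\infty$-gon is a candidate $(n+3)$-gon whose diagonals assemble into an $(n+3)$-angle.

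For the injection $K_0^{\rm in}(\T)\hookrightarrow\mathbb{Z}$, I would construct an explicit group homomorphism $\varphi:K_0^{\rm sp}(\T)\to\mathbb{Z}$ that descends to $K_0^{\rm in}(\T)$ and sends $[T_1]$ to $1$. The natural candidate is $\varphi([T_j])=\varepsilon_j\in\{\pm 1\}$ with a sign pattern tuned so that $\sum_{i=1}^{n+3}(-1)^{i+1}\varphi([X_i])$ vanishes on every $(n+3)$-angle with terms in $\T$. When $n$ is odd the constant choice $\varepsilon_j\equiv 1$ already works because $n+3$ is even, while when $n$ is even the pattern has to be refined using the position of each $T_j$ in the zig-zag.

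The main obstacle is the combinatorial step: producing a controlled family of $(n+3)$-angles with terms in $\T$ directly from the $n$-admissibility condition $u-t\equiv 1\,(\mathrm{mod}\ n)$ and the specific shape of $\T$, and then verifying compatibility of $\varphi$ with all of them. Once these $(n+3)$-angles are in hand and the sign pattern for $\varphi$ is fixed, composing the surjection with $\varphi$ yields the identity on $\mathbb{Z}$, proving $K_0^{\rm in}(\T)\cong\mathbb{Z}$ and, via Lemma \ref{C}, the theorem.
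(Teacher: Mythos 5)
Your overall strategy is the same as the paper's: apply Lemma \ref{C} to the locally finite $(n+1)$-cluster tilting subcategory $\T$ of Example \ref{excluster} and compute $K_0^{\rm in}(\T)$ from relations coming from $(n+3)$-angles with terms in $\T$. However, the step you defer as ``the main obstacle'' is precisely the mathematical content of the proof, and your heuristic for it (reading $(n+3)$-angles off from ``blocks of consecutive arcs in a small region of the $\infty$-gon'') is not carried out and is not obviously what is needed here. The paper produces the required $(n+3)$-angles concretely and cheaply: for each $T_i$ one splices the $n-1$ trivial rotation triangles $\Sigma^{j}T_i \to 0 \to \Sigma^{j+1}T_i$ with the single Auslander--Reiten triangle ending (or starting) at $T_i$, whose middle term is $T_2$ (for $i=1$) or $T_{i-1}\oplus T_{i+1}$, hence lies in $\T$. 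This yields an $(n+3)$-angle whose terms are $T_i,0,\dots,0,T_{i-1}\oplus T_{i+1},T_i$, and therefore the explicit relations $[T_1]+(-1)^{n+1}[T_2]+(-1)^{n+2}[T_1]=0$, $[T_{2k+1}]+(-1)^{n+1}[T_{2k}\oplus T_{2k+2}]+(-1)^{n+2}[T_{2k+1}]=0$, and $[T_{2k}]-[T_{2k-1}\oplus T_{2k+1}]+(-1)^{n+2}[T_{2k}]=0$. Without some such explicit construction your ``iterating collapses all classes'' step has nothing to iterate on, so as written the proposal has a genuine gap exactly where the paper does its work.

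There is also a concrete error in your injectivity plan: the candidate homomorphism $\varphi([T_j])=\varepsilon_j\in\{\pm 1\}$ cannot be compatible with the relations when $n$ is even. Indeed, for $n$ even the angle $T_1\to 0\to\cdots\to 0\to T_2\to T_1$ forces $[T_2]=2[T_1]$, and inductively $[T_j]=j[T_1]$, so any homomorphism to $\mathbb{Z}$ descending to $K_0^{\rm in}(\T)$ must take unbounded values (e.g.\ $\varphi([T_j])=j$), not signs; for $n$ odd the correct pattern is $\varphi([T_{2k}])=0$, $\varphi([T_{2k+1}])=(-1)^k$, again not the constant choice. Your instinct to supply an injection $K_0^{\rm in}(\T)\hookrightarrow\mathbb{Z}$ is reasonable (the paper itself only exhibits generation by $[T_1]$), but verifying that such a $\varphi$ kills the Euler relation of \emph{every} $(n+3)$-angle with terms in $\T$ is a nontrivial task that your proposal acknowledges but does not address, and with the $\{\pm1\}$ ansatz it would fail outright for even $n$.
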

\begin{proof}
Take $$T_{2k}=(1-kn,2+kn),\; k=1,2,\cdots$$
 $$T_{2k+1}=(1-kn,2+(k+1)n), \;k=0,1,2,\cdots$$
as in Example \ref{excluster}. Then $\T=\bigoplus_{i=1}^{\infty}T_i$ is an $(n+1)$-cluster tilting subcategory of $\C_{A_{\infty}}^n$. 
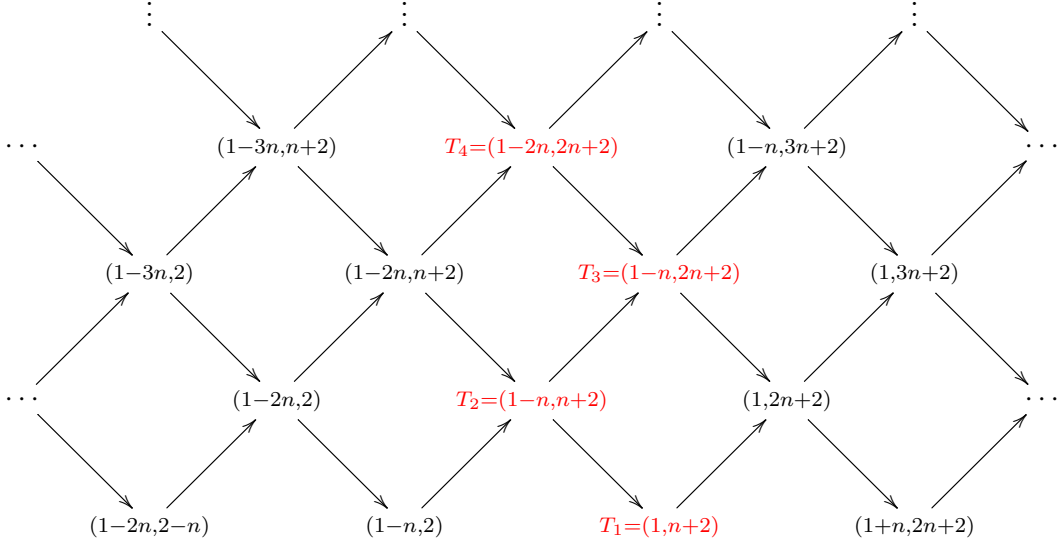
\begin{figure}[h]
\[
  \xymatrix @-4.0pc @! {
    & \vdots \ar[dr] & & \vdots \ar[dr] & & \vdots \ar[dr] & & \vdots \ar[dr] &  \\
    \cdots \ar[dr]& & {\scriptstyle (1-3n,n+2)} \ar[ur] \ar[dr] & & {\scriptstyle {\red T_4=(1-2n,2n+2)}} \ar[ur] \ar[dr] & & {\scriptstyle (1-n,3n+2)} \ar[ur] \ar[dr]  & & \cdots \\
    & {\scriptstyle (1-3n,2)} \ar[ur] \ar[dr] & & {\scriptstyle (1-2n,n+2)} \ar[ur] \ar[dr] & & {\scriptstyle {\red T_3=(1-n,2n+2)}} \ar[ur] \ar[dr] & & {\scriptstyle (1,3n+2)} \ar[ur] \ar[dr] & \\
    \cdots \ar[ur]\ar[dr]& & {\scriptstyle (1-2n,2)} \ar[ur] \ar[dr] & & {\scriptstyle {\red T_2=(1-n,n+2)}} \ar[ur] \ar[dr] & & {\scriptstyle (1,2n+2)} \ar[ur] \ar[dr] & & \cdots\\
    & {\scriptstyle (1-2n,2-n)} \ar[ur] & & {\scriptstyle (1-n,2)} \ar[ur] & & {\scriptstyle {\red T_1=(1,n+2)}} \ar[ur] & & {\scriptstyle (1+n,2n+2)} \ar[ur] & \\
               }
\]

\caption{The component containing $T_1$ of the Auslander-Reiten quiver  of $\C_{A_{\infty}}^n$.}
\label{figure8}
\end{figure}

First, we consider the $(n+3)$-angle starting at $T_1$ and ending at $T_1$. Note that there is an Auslander-Reiten triangle
$$(1-n,2)\rightarrow T_{2}\rightarrow T_{1}$$
by Figure \ref{figure8}, and $\Sigma^{n}(1,n+2)=(1-n,2)$, then there exists an $(n+3)$-angle starting at $T_1$ and ending at $T_1$:
\begin{equation*}
				\begin{tikzcd}[column sep =10.5, row sep =10.5]
					&0 \ar[dr]  \ar[rr]&& 0 \ar[dr] \ar[r] & \cdots  &  \cdots &  \cdots\ar[r]&0 \ar[dr]\ar[rr]  & & T_2   \ar[dr]&\\
					T_1\ar[ur] &&\Sigma T_1 \ar[ll,rightarrow]\ar[ur]&   &\Sigma^{2} T_1  \ar[ll,rightarrow] \ar[r]& \cdots\ar[r] &\Sigma^{n-1}T_1   \ar[ur]& &\Sigma^{n}T_1\ar[ur]\ar[ll,rightarrow]	&   &T_1.\ar[ll,rightarrow]
				\end{tikzcd}
			\end{equation*}
Thus, we have $[T_1]+(-1)^{n+1}[T_2]+(-1)^{n+2}[T_1]=0$ in  $K_{0}^{\rm in}(\T)$.

For different values of $i$, the $(n+3)$-angle starting at $T_i$ and ending at $T_i$ is different, we have the following two cases:
\begin{itemize}
  \item [(1)] For the object $T_{2k+1}$ with $k=1,2,\cdots$, there exists an Auslander-Reiten triangle
$$\Sigma^nT_{2k+1}\rightarrow T_{2k}\oplus T_{2k+2}\rightarrow T_{2k+1}$$
by Figure \ref{figure8}. This implies there exists an $(n+3)$-angle starting at $T_{2k+1}$ and ending at $T_{2k+1}$:
\begin{equation*}
				\begin{tikzcd}[column sep =9, row sep =9]
					&0 \ar[dr]  \ar[rr]&& 0 \ar[dr] \ar[r] & \cdots  &  \cdots &  \cdots\ar[r]&0 \ar[dr]\ar[rr]  & & T_{2k}\oplus T_{2k+2}   \ar[dr]&\\
					T_{2k+1}\ar[ur] &&\Sigma T_{2k+1} \ar[ll,rightarrow]\ar[ur]&   &\Sigma^{2} T_{2k+1}   \ar[ll,rightarrow] \ar[r]& \cdots\ar[r] &\Sigma^{n-1}T_{2k+1}    \ar[ur]& &\Sigma^{n}T_{2k+1}\ar[ur]\ar[ll,rightarrow]	&   &T_{2k+1}.\ar[ll,rightarrow]
				\end{tikzcd}
			\end{equation*}
Thus, we have $[T_{2k+1}]+(-1)^{n+1}[T_{2k}\oplus T_{2k+2}]+(-1)^{n+2}[T_{2k+1}]=0$ in  $K_{0}^{\rm in}(\T)$.
  \item [(2)]For the object $T_{2k}$ with $k=1,2,\cdots$, since $\Sigma^{-n}T_{2k}=(1-(k-1)n,2+(k+1)n)$, and there exists an Auslander-Reiten triangle
$$T_{2k}\rightarrow T_{2k-1}\oplus T_{2k+1}\rightarrow \Sigma^{-n}T_{2k}$$
by Figure \ref{figure8},  there exists an $(n+3)$-angle starting at $T_{2k}$ and ending at $T_{2k}$:
\begin{equation*}
				\begin{tikzcd}[column sep =9, row sep =9]
					& T_{2k-1}\oplus T_{2k+1} \ar[dr]  \ar[rr]&& 0 \ar[dr] \ar[r] & \cdots  &  \cdots &  \cdots\ar[r]&0 \ar[dr]\ar[rr]  & & 0   \ar[dr]&\\
					T_{2k}\ar[ur] &&\Sigma^{-n}T_{2k} \ar[ll,rightarrow]\ar[ur]&   &\Sigma^{-n+1}T_{2k}   \ar[ll,rightarrow] \ar[r]& \cdots\ar[r] &\Sigma^{-2}T_{2k}    \ar[ur]& &\Sigma^{-1}T_{2k}\ar[ur]\ar[ll,rightarrow]	&   &T_{2k}.\ar[ll,rightarrow]
				\end{tikzcd}
			\end{equation*}
Then we have $[T_{2k}]-[T_{2k-1}\oplus T_{2k+1}]+(-1)^{n+2}[T_{2k}]=0$ in  $K_{0}^{\rm in}(\T)$.
\end{itemize}

Combing the above cases, we have that:
\begin{itemize}
  \item [(1)] Suppose $n$ is odd. Then 
  $$[T_2]=0, [T_{2k+2}]+[T_{2k}]=0, [T_{2k+1}]+[T_{2k-1}]=0,$$
  for $k=1,2,\cdots$. This implies 
  $$[T_{2k}]=0, [T_{2k+1}]=(-1)^k[T_1]$$
  for $k=1,2,\cdots$. Hence $$K_0(\C_{A_{\infty}}^n)=\mathbb{Z}.$$
  \item [(2)] Suppose $n$ is even. Then 
  $$[T_2]=2[T_1], 2[T_{2k+1}]=[T_{2k+2}]+[T_{2k}], 2[T_{2k}]=[T_{2k+1}]+[T_{2k-1}],$$
  for $k=1,2,\cdots$. 
  We show $$[T_i]=i[T_1]$$ with $i=1,2,\cdots$ by mathematical induction in the following. Since $[T_3]=2[T_2]-[T_1]=3[T_1]$, this implies $[T_i]=i[T_1]$ holds for $i=3$. Suppose the equation holds for $i\leq a$, we show it holds for $a+1$.
  
  If $a+1$ is odd, then $a+1=2k+1$ for some $k$. Thus
  $$[T_{a+1}]=[T_{2k+1}]=2[T_{2k}]-[T_{2k-1}]=4k[T_1]-(2k-1)[T_1]=(a+1)[T_1].$$
  
  If $a+1$ is even, then $a+1=2k+2$ for some $k$. Thus
  $$[T_{a+1}]=[T_{2k+2}]=2[T_{2k+1}]-[T_{2k}]=2(2k+1)[T_1]-2k[T_1]=(a+1)[T_1]$$  
  This shows $$[T_i]=i[T_1]$$ with $i=1,2,\cdots$. Hence $$K_0(\C_{A_{\infty}}^n)=\mathbb{Z}.$$
\end{itemize}
\end{proof}

We find an interesting phenomenon: the Grothendieck groups of $\C_{A_{\infty}}^n$ do not depend on $n$. Note that when $n=1$, $\C_{A_{\infty}}^1$ is the cluster category of type $A_{\infty}$, so we have the following corollary.
\begin{corollary}\cite[Theorem 4.6]{Mu}
The Grothendieck group of $\C_{A_{\infty}}^1$ is
$$K_0(\C_{A_{\infty}}^1)=\mathbb{Z}.$$
\end{corollary}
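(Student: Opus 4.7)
The plan is to invoke Lemma~\ref{C} with the $(n+1)$-cluster tilting subcategory $\T$ constructed in Example~\ref{excluster}, whose indecomposables are the nested arcs $T_1,T_2,T_3,\ldots$ on the $\infty$-gon. This reduces the problem to computing $K_0^{\rm in}(\T)$, which is the free abelian group on $\{[T_i]\}_{i\geq 1}$ modulo the alternating-sum relations arising from $(n+3)$-angles with all terms in $\T$. My strategy is to produce, for each $T_i$, one canonical $(n+3)$-angle built out of the Auslander--Reiten triangle at $T_i$, and then to show that the resulting relations force every $[T_i]$ to be an integer multiple of $[T_1]$.

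For each index $i$ I would construct an $(n+3)$-angle starting and ending at $T_i$ whose intermediate objects $C_2,\ldots,C_{n+1}$ are the shifts $\Sigma T_i,\Sigma^2 T_i,\ldots,\Sigma^{n-1}T_i$. Because $\Sigma$ cyclically permutes the $n$ Auslander--Reiten components of $\C_{A_\infty}^n$, these intermediate objects lie in components different from the one containing $T_i$, and in particular disjoint from every $T_j$; this lets me take the $n-1$ interior triangles to have zero middle term, collapsing to $C_{k+1}=\Sigma C_k$. The two outer triangles are then genuine Auslander--Reiten triangles, and their shapes can be read directly from the AR-quiver component of $\T$ (Figure~\ref{figure8}): for $T_1$ the triangle $\Sigma^{n}T_1\to T_2\to T_1$; for $T_{2k+1}$ with $k\geq 1$ the triangle $\Sigma^{n}T_{2k+1}\to T_{2k}\oplus T_{2k+2}\to T_{2k+1}$; and for $T_{2k}$ the triangle $T_{2k}\to T_{2k-1}\oplus T_{2k+1}\to\Sigma^{-n}T_{2k}$.

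Taking alternating sums in $K_0^{\rm in}(\T)$ produces relations whose form splits on the parity of $n$. When $n$ is odd, I expect the relations $[T_2]=0$, $[T_{2k+2}]+[T_{2k}]=0$ and $[T_{2k+1}]+[T_{2k-1}]=0$, which immediately yield $[T_{2k}]=0$ and $[T_{2k+1}]=(-1)^{k}[T_1]$. When $n$ is even, I expect $[T_2]=2[T_1]$ together with the symmetric three-term recurrences $2[T_{2k+1}]=[T_{2k}]+[T_{2k+2}]$ and $2[T_{2k}]=[T_{2k-1}]+[T_{2k+1}]$, from which a short induction on $i$ gives $[T_i]=i[T_1]$. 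Either way, $K_0^{\rm in}(\T)$ is cyclic, generated by $[T_1]$.

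The main obstacle I foresee is the opposite direction: showing that $[T_1]$ has infinite order, so that the cyclic group in question is $\mathbb{Z}$ rather than a finite quotient of it. The cleanest route would be to exhibit a group homomorphism $K_0^{\rm in}(\T)\to\mathbb{Z}$ sending each $[T_i]$ to the integer predicted above, and to verify well-definedness by checking that every $(n+3)$-angle with terms in $\T$ satisfies the corresponding alternating-sum identity. Using Lemma~\ref{aa} and the explicit arc combinatorics of $\T$, this should reduce to a bounded local check on the possible shapes of $(n+3)$-angles supported on the nested family $\{T_i\}$.
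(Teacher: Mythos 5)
Your proposal follows essentially the same route as the paper: the corollary is obtained there as the $n=1$ instance of the main theorem, whose proof is exactly the computation you describe --- reduce to $K_0^{\rm in}(\T)$ via Lemma~\ref{C} with the locally finite $(n+2)$-angulation of Example~\ref{excluster}, build for each $T_i$ an $(n+3)$-angle from the Auslander--Reiten triangle at $T_i$ with intermediate terms $\Sigma T_i,\dots,\Sigma^{n-1}T_i$ (which indeed miss $\T$ since $\Sigma$ permutes the AR components), and read off the parity-dependent relations $[T_2]=0$, $[T_{2k}]+[T_{2k+2}]=0$, $[T_{2k-1}]+[T_{2k+1}]=0$ for $n$ odd, respectively $[T_2]=2[T_1]$ and the three-term recurrences for $n$ even. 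Your relations and conclusions agree with the paper's verbatim. The one point where you go beyond the paper is your final paragraph: you correctly observe that these selected relations only show $K_0^{\rm in}(\T)$ is cyclic on $[T_1]$, and that one still needs $[T_1]$ to have infinite order, i.e.\ that the remaining $(n+3)$-angle relations do not introduce torsion. The paper passes over this silently, so you are not contradicting it, but your proposed remedy (a well-defined homomorphism $K_0^{\rm in}(\T)\to\mathbb{Z}$ checked on all $(n+3)$-angles with terms in $\T$) is left as a sketch, and the claim that this is a ``bounded local check'' is not obviously true, since general $(n+3)$-angles with terms in $\T$ need not be of the AR type you used and may involve arbitrary finite direct sums of the $T_i$. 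For the corollary itself this is not fatal: for $n=1$ the cited result of Murphy gives $K_0(\C_{A_\infty}^1)\cong\mathbb{Z}$ independently, but if you intend your argument to be self-contained you should either carry out the well-definedness check or exhibit some other surjection onto $\mathbb{Z}$ detecting $[T_1]$.
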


\vspace{1cm}
\hspace{-5mm}\textbf{Data Availability}\hspace{2mm} Data sharing not applicable to this article as no datasets were generated or analysed during
the current study.
\vspace{2mm}

\hspace{-5mm}\textbf{Conflict of Interests}\hspace{2mm} The author declare that she has no conflicts of interest to this work.


\begin{thebibliography}{99}
%
\bibitem{CZ} H.~Chang, X.~Zhuang.  The Grothendieck groups of $m$-cluster categories of type $D_n$. Comm. Algebra 52 (8) (2024), 3181--3193.
%
\bibitem{F} F. Fedele. Grothendieck groups of triangulated categories via cluster tilting subcategories. Nagoya Math. J. (2020), 1--28.
    %
\bibitem{HJ} T.~Holm, P.~J{\o}rgensen. On a  cluster category of infinite Dynkin type, and the relation to triangulations of the infinity-gon. Math. Z. 270 (2012), 277--295.
    %
\bibitem{HJ1} T.~Holm, P.~J{\o}rgensen. Cluster tilting vs. weak cluster tilting in Dynkin type A infinity. Forum Math. 27 (2015), 1117--1137.
%
\bibitem{HJY} T. Holm, P.~J{\o}rgensen, D. Yang. Sparseness of $t$-structures and negative Calabi-Yau dimension in triangulated categories generated by a spherical object. Bull. Lond.
Math. Soc. 45(1)(2013),  120--130.
%
\bibitem{J} P. J{\o}rgensen. Auslander-Reiten theory over topological spaces. Comment. Math. Helv. 79 (2004), 160--182.
 %
\bibitem{KYZ} B. Keller, D. Yang, G. Zhou,: The Hall algebra of a spherical object. J. Lond. Math. Soc. 80 (2009), 771--784.
%
\bibitem{M} H. Matsui. Classifying dense resolving and coresolving subcategories of exact categories via Grothen
dieck groups. Algebr. Represent. Theory 21 (3) (2018), 551--563.
%
\bibitem{Mu} D. Murphy. The Grothendieck groups of discrete cluster categories of Dynkin type $A_{\infty}$. J. Algebra 662 (2025), 545--567.
 %
\bibitem{P} Y. Palu. Grothendieck group and generalized mutation rule for 2-Calabi–Yau triangulated categories. J. Pure Appl. Algebra 213 (2009), 1438--1449.
%
\bibitem{Wa} L. Wang. The Grothendieck group of an extriangulated category.  arXiv:2508.12545v2, 2025. 
 %
\bibitem{WWZ} L. Wang, J. Wei, H. Zhang. Indices and $c$-vectors in extriangulated categories. Sci. China Math. 66 (2023), 1949--1964.
 %
\bibitem{XZ} J. Xiao, B. Zhu.
Relations for the Grothendieck groups of triangulated categories. J. Algebra 257(2002), 37--50.
%
\bibitem{T} R. W. Thomason. The classification of triangulated subcategories. Compos. Math. 105(1)(1997), 1--27.
 %
\end{thebibliography}
\end{document}